\newcounter{example}
\newenvironment{example}[1][]{\refstepcounter{example}\par\medskip
\noindent \textbf{Example~\theexample. (#1)} \rmfamily}{\medskip}    
\newcommand{\C}{{\mathbb C}}
\newcommand{\R}{{\mathbb R}}
\newtheorem{theorem}{Theorem}
\newtheorem{defin}{Definition}
\newtheorem{cor}{Corollary}
\newtheorem{remark}{Remark}
\theoremstyle{definition}
\def\@tocline#1#2#3#4#5#6#7{\relax
  \ifnum #1>\c@tocdepth 
  \else
    \par \addpenalty\@secpenalty\addvspace{#2}%
    \begingroup \hyphenpenalty\@M
    \@ifempty{#4}{%
      \@tempdima\csname r@tocindent\number#1\endcsname\relax
    }{%
      \@tempdima#4\relax
    }%
    \parindent\z@ \leftskip#3\relax \advance\leftskip\@tempdima\relax
    \rightskip\@pnumwidth plus4em \parfillskip-\@pnumwidth
    #5\leavevmode\hskip-\@tempdima
      \ifcase #1
       \or\or \hskip 1em \or \hskip 2em \else \hskip 3em \fi%
      #6\nobreak\relax
    \hfill\hbox to\@pnumwidth{\@tocpagenum{#7}}\par
    \nobreak
    \endgroup
  \fi}
\numberwithin{equation}{section}
\def\subsubsection{\@startsection{subsubsection}{3}%
\z@{.5\linespacing\@plus.7\linespacing}{-.5em}%
{\normalfont\bfseries}}
\title{CR-relatives K\"ahler manifolds}
\author{Stefano Marini}
\address{(Stefano Marini) 
Universit\`a di Parma (Italy)\\  Dipartimento di Scienze Matematiche, Fisiche e Informatiche}
        \email{stefano.marini@unipr.it}
\author{Michela Zedda}
\address{(Michela Zedda) Universit\`a di Parma (Italy)\\  Dipartimento di Scienze Matematiche, Fisiche e Informatiche}
\email{michela.zedda@unipr.it}
\date{\today}
\subjclass[2020]{32V30, 32V40,32Q40}
\keywords{CR submanifolds of K\"ahler manifolds}
\thanks{
This work has been financially supported by the Programme “FIL-Quota Incentivante” of University of Parma, co-sponsored by Fondazione Cariparma, by the PRIN project ``Real and Complex Manifolds: Topology, Geometry and holomorphic dynamics'', and by the group G.N.S.A.G.A. of I.N.d.A.M.} 
\begin{document}

\maketitle

\begin{abstract}
In this paper we show that two K\"ahler manifolds which do not share a K\"ahler submanifold, do not share either a Levi degenerate CR--submanifold with constant dimension Levi kernel. In particular, they do not share a CR--product.  Further, we obtain that a Levi degenerate CR-submanifold of $\mathds C^n$ cannot be isometrically immersed into a flag manifold.

\end{abstract}

\section{Introduction and statement of the main result}

Two K\"ahler manifolds $({\rm\tilde M_1},\omega_1)$, $({ \rm\tilde M_2}, \omega_2)$, are called {\em relatives} when they share a common K\"ahler submanifold, that is when there exists a K\"ahler manifold $(\rm M,\omega)$ and two holomorphic maps $f_1\!:\rm M\rightarrow \tilde M_1$, $f_2\!:\rm M,\rightarrow  {\rm\tilde M_2}$, such that $f_1^*\omega_1=f_2^*\omega_2$.
The problem of understanding when two K\"ahler manifolds are relatives appears in literature in Umehara's work \cite{umehara}, where it is proven that two complex space form with holomorphic sectional curvature of different signs can not be relative. Umehara's proof relies on Calabi's criterion for the existence of a holomorphic and isometric immersion of a K\"ahler manifold into complex space forms \cite{calabi} (see also \cite{libro}). In \cite{loidiscala} it is shown that Hermitian symmetric spaces with different compact types are not relatives. Further, in \cite{huangyuan} it has been proven that Euclidean spaces and Hermitian symmetric spaces of noncompact type are not relatives, and the same result for Hermitian symmetric spaces of compact type follows by Umehara's result and Nakagawa-Takagi embedding of Hermitian symmetric spaces of compact type into the complex projective space. The relativity problem has been investigated also in \cite{chengdiscalayuan}, where it has been reformulated in terms of the Umehara's algebra, and in \cite{mossa, sutangtu, relatives}.

In this paper we are interested in studying when two K\"ahler manifolds are {\em CR--relatives}, i.e. when they share a CR--submanifold.  The notion of CR--submanifold of a K\"ahler manifold was introduced and firstly studied by A. Bejancu in \cite{bejancuI,bejancuII}.  In \cite{chenI} B. Y. Chen studied a special family of CR--submanifolds into complex space forms called CR--products. In particular, he proved that a CR--product isometrically immersed in $\mathds C^m$ is globally a Riemannian product of a holomorphic submanifold and a totally real one, and he gave a characterization of CR--products of maximal dimension isometrically immersed in a complex projective space. In \cite{chenII} the existence of a foliation with totally real leaves is investigated, and further results on CR--submanifolds of complex space forms can be found in \cite{bejancuIII}.  For a more comprehensive exposition on this topic, we refer the reader to \cite{yanokon1982}.  Our main result is the following:

\begin{theorem}\label{main}
Two K\"ahler manifolds which are not relatives do not share a Levi  degenerate CR--submanifold with constant dimension Levi kernel.
\end{theorem}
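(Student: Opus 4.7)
The plan is to produce a common K\"ahler submanifold of $\tilde M_1$ and $\tilde M_2$ directly from the foliation induced by the Levi kernel, thereby contradicting the non-relatives hypothesis. By assumption there exist CR-isometric immersions $f_i : M \to \tilde M_i$ inducing the same Riemannian metric and the same CR structure $(\mathcal{D}, J)$ on $M$. Denote by $\mathcal{N} \subset \mathcal{D}$ the Levi kernel: it is automatically $J$-invariant, it is nontrivial by Levi degeneracy, and it has constant rank by hypothesis.

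The first step is to invoke the classical CR-geometric fact that the Levi kernel, when of constant rank, is involutive, producing a foliation $\mathcal{F}$ of $M$ whose leaves $L$ are $J$-invariant and therefore carry the structure of complex manifolds of positive dimension. Because $f_i$ is a CR-map sending $\mathcal{D}$ into the ambient holomorphic tangent bundle of $\tilde M_i$, the restriction $f_i|_L : L \to \tilde M_i$ is a holomorphic immersion realizing $L$ as a complex submanifold of $\tilde M_i$, hence as a K\"ahler submanifold with the induced metric. Since the Riemannian structure on $M$ coming from $\tilde M_1$ agrees with that coming from $\tilde M_2$, the two induced K\"ahler forms on $L$ coincide: $f_1|_L^* \omega_1 = f_2|_L^* \omega_2$. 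Thus $L$ is a common K\"ahler submanifold, showing that $\tilde M_1$ and $\tilde M_2$ are relatives, contrary to hypothesis.

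The main obstacle is justifying the involutivity of the Levi kernel under the constant-rank assumption. This is standard but delicate: for sections $X, Y$ of $\mathcal{N}$ one computes the Levi form $\mathcal{L}(X, Y)$, uses its vanishing to control the $\mathcal{D}^\perp$-component of $[X, Y]$, and then exploits the $J$-invariance of $\mathcal{N}$ together with the vanishing of $\mathcal{L}(X, JY)$ to conclude $[X, Y] \in \mathcal{N}$. A secondary point is to verify that the intrinsic complex structure on a leaf (inherited from $J|_{\mathcal{N}}$) matches the one induced from the ambient K\"ahler manifolds; this is immediate from the definition of CR-map, since $f_i$ intertwines $J$ on $\mathcal{D}$ with the ambient complex structures on $T\tilde M_i$. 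Together, these two points reduce the theorem to the construction described above.
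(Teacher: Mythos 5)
Your proposal is correct and follows essentially the same route as the paper: both pass through the foliation by complex leaves tangent to the constant-rank Levi kernel (the involutivity/complex-leaf statement you sketch is exactly Freeman's theorem, which the paper cites), then observe that a leaf, being holomorphically and isometrically immersed in both ambient spaces with matching induced metrics, is a common K\"ahler submanifold contradicting the non-relatives hypothesis. The only difference is presentational: the paper delegates the involutivity and integrability step to Freeman's result rather than arguing it directly, and it spells out the verification that the leaf's induced structure is Hermitian with closed associated $(1,1)$-form.
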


Notice that the Levi degenerate condition cannot be dropped, as shown for example by the canonical CR-structure on the $3$-sphere, which is induced from both the flat K\"ahler structure on $\mathds C^2$ and the Fubini--Study one on $\mathds C{\rm P}^2$, proving that $\mathds C^2$ and $\mathds C{\rm P}^2$ are an example of CR--relatives K\"ahler manifolds which, by Umehara's result Th. \ref{umehara} below, are not relatives.  It is worth pointing out that if two K\"ahler manifolds are relative then they are also CR--relatives in a trivial way, since a K\"ahler submanifold can be viewed as a (holomorphic) CR--submanifold. Finally, the condition on the Levi kernel is a natural assumption which is fundamental to obtain the existence  of a foliation with complex leaves.

As direct consequence of Theorem \ref{main} we get the following:
\begin{cor}\label{cor2}
K\"ahler manifolds which are not relatives do not share a CR--product.
\end{cor}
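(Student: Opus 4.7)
The strategy is to deduce Corollary \ref{cor2} as an immediate application of Theorem \ref{main}. To do this, it suffices to verify that every CR--product in a K\"ahler manifold is Levi degenerate with Levi kernel of constant dimension, so that Theorem \ref{main} applies verbatim.

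I would first recall Chen's definition: a CR--product in $\tilde M$ is a CR--submanifold $M$ whose holomorphic distribution $\mathcal D$ and totally real distribution $\mathcal D^\perp$ are both integrable, with $M$ locally a Riemannian product $M_T\times M_\perp$ of their integral submanifolds, where $M_T$ is a K\"ahler submanifold of $\tilde M$ tangent to $\mathcal D$ and $M_\perp$ is totally real tangent to $\mathcal D^\perp$. The degenerate case $\dim\mathcal D^\perp=0$ reduces to $M=M_T$ being already a shared K\"ahler submanifold, so relativity is automatic. Assuming $\dim\mathcal D^\perp>0$, I would compute the Levi form of the induced CR--structure $T^{1,0}M$ in local adapted coordinates $(z^1,\dots,z^h,t^1,\dots,t^p)$ with $z^i$ holomorphic on $M_T$ and $t^j$ real on $M_\perp$. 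Here $T^{1,0}M$ is spanned by the commuting fields $\partial/\partial z^i$, so $[\partial/\partial z^i,\partial/\partial\bar z^j]=0$ and the Levi form vanishes identically.

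Vanishing of the Levi form means that the Levi kernel coincides with all of $T^{1,0}M$, which has constant complex dimension $h=\dim_\C\mathcal D$. Consequently $M$ is Levi degenerate with Levi kernel of constant dimension, the hypotheses of Theorem \ref{main} are satisfied, and the two ambient K\"ahler manifolds must be relatives -- contradicting the assumption and yielding the corollary. I do not foresee any serious obstacle: the only substantive point is the vanishing of the Levi form on a CR--product, and this is immediate from the local product structure; everything else is bookkeeping around the trivial boundary cases.
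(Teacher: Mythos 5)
Your proposal is correct and follows essentially the same route as the paper: the paper's proof likewise observes that a CR--product is Levi flat, so that $\mathcal D=\emph{Null}(\mathcal L^{\mathbb R})$ has constant dimension, and then invokes Theorem \ref{main}. Your explicit computation of the vanishing of the Levi form in adapted product coordinates, and your handling of the case $\dim\mathcal D^\perp=0$, merely fill in details the paper leaves implicit.
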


Further, combining Theorem \ref{main} with a result of A. Loi and R. Mossa (see \cite{loimossa} or Th. \ref{loimossa} below) we get the following:
\begin{cor}\label{cor}
A Levi degenerate with constant dimension Levi kernel CR-submanifold of $\mathds C^m$ cannot be isometrically immersed into a flag manifold.
\end{cor}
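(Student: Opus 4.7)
The plan is to argue by contradiction, combining the contrapositive form of Theorem~\ref{main} with the result of A.~Loi and R.~Mossa recalled as Th.~\ref{loimossa}, which asserts that $\mathds{C}^m$ is not a K\"ahler relative of any flag manifold.

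Suppose, toward a contradiction, that $M\subset\mathds{C}^m$ is a Levi degenerate CR-submanifold with constant dimension Levi kernel that admits an isometric immersion $\varphi\!:M\to F$ into a flag manifold $F$, realizing $M$ as a CR-submanifold of $F$ with its given CR structure inherited from $\mathds{C}^m$. Then, together with the inclusion $M\hookrightarrow\mathds{C}^m$ and the immersion $\varphi\!:M\to F$, the two K\"ahler manifolds $\mathds{C}^m$ and $F$ share the common CR-submanifold $M$. In particular they are CR-relatives sharing a Levi degenerate CR-submanifold with constant dimension Levi kernel, in exactly the sense used in the statement of Theorem~\ref{main}.

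Applying the contrapositive of Theorem~\ref{main} yields that $\mathds{C}^m$ and $F$ must be K\"ahler relatives, contradicting Th.~\ref{loimossa}. The only conceptual point requiring care---and what I expect to be the main (though mild) obstacle---is to unwind the phrase ``isometrically immersed into a flag manifold'' so that it genuinely provides the data of a shared CR-submanifold in the precise sense of Theorem~\ref{main}; once this bookkeeping is made explicit, the corollary follows immediately by chaining the two cited results, with no further computation required.
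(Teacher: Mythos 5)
Your overall strategy---apply the contrapositive of Theorem~\ref{main}, so it suffices to know that $\mathds C^m$ and a flag manifold are never K\"ahler relatives---is exactly the skeleton of the paper's argument. But there is a genuine gap: you attribute the non-relativity statement to Theorem~\ref{loimossa}, and that theorem does not say this. What Loi--Mossa actually provides is only that a simply-connected homogeneous K\"ahler manifold with integral K\"ahler form admits, after rescaling the metric by some $\lambda>0$, a K\"ahler immersion into $(\mathds C{\rm P}^m,g_{FS})$. Turning this into ``a flag manifold and $\mathds C^{m'}$ are not relatives'' is the actual content of the paper's proof and requires two further steps that your proposal omits entirely: (i) a rescaling argument at the level of K\"ahler potentials showing that if $({\rm M},\lambda g)$ immerses into $\mathds C{\rm P}^m_1$ via $f$, then $\frac{1}{\sqrt\lambda}f$ gives a local K\"ahler immersion of $({\rm M},g)$ into $\mathds C{\rm P}^m_\lambda$ (the Fubini--Study space of holomorphic sectional curvature $4\lambda$), so that the original, unrescaled flag metric is the one being compared; and (ii) the fact that $\mathds C^{m'}$ and $\mathds C{\rm P}^m_\lambda$ are not relatives for any $m,m',\lambda>0$, which follows from Umehara's Theorem~\ref{umehara} combined with Calabi's Rigidity Theorem~\ref{calabirigidity}, not from Theorem~\ref{loimossa}. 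One must also note that when the flag manifold is not simply connected Theorem~\ref{loimossa} only applies locally, which is enough since relativity is a local notion.

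Without step (ii) in particular, your argument has no source for the claim that $\mathds C^m$ and a flag manifold fail to be relatives, so the contradiction you reach is unsupported. The fix is to insert the Umehara--Calabi chain: any common K\"ahler submanifold of $\mathds C^{m'}$ and the flag manifold would, after composing with the immersion from (i), become a common K\"ahler submanifold of $\mathds C^{m'}$ and $\mathds C{\rm P}^m_\lambda$; but a K\"ahler submanifold of $\mathds C^{m'}$ fully immerses into $\mathds C{\rm P}^\infty_\lambda$ by Theorem~\ref{umehara}, and Calabi rigidity then forbids it from also admitting a full immersion into a finite-dimensional $\mathds C{\rm P}^m_\lambda$. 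Your final paragraph about unwinding ``isometrically immersed'' into the data of a shared CR-submanifold is a fair point of bookkeeping, but it is the minor issue here; the missing non-relativity argument is the essential one.
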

In particular the CR--submanifolds of hypersurface type of $\mathds C^m$ given in Example \ref{cartan} cannot be isometrically immersed as CR--submanifolds into a flag manifolds. \\

The paper is organized as follows. In Section  \ref{relsec} we collects some results about K\"ahler relatives and K\"ahler immersions into the complex flat and complex projective spaces,  while in Section \ref{crsec} we summarize what we need on CR--manifolds and CR--submanifolds of a K\"ahler manifold.  Finally in Section \ref{proofmain} we prove Theorem \ref{main} and its corollaries.
\section{Preliminaries on K\"ahler manifolds}\label{relsec}
Throughout this section, let ${\rm M}$ be a K\"ahler manifold endowed with a K\"ahler metric $g$, and let us denote by $\omega$ the closed $(1,1)$-form associated to $g$, i.e. $\omega(,\cdot,\cdot)=g(J\cdot,\cdot)$.  Recall that a K\"ahler manifold is characterized by the existence in a neighborhood $U$ of each point $x\in {\rm M}$ of a K\"ahler potential $\varphi\!:U\rightarrow \mathds R$ such that $\omega|_U:=\frac i2\partial\bar\partial \varphi$.

Denote by $g_0$ the flat metric on the complex euclidean space $\mathds C^m$, and by $\omega_0$ the associate K\"ahler form, i.e. $\omega_0:=\frac i2\partial\bar\partial ||z||^2$ (where $z=(z_1,\dots, z_m)$ are holomorphic coordinates on $\mathds C^m$). Further denote by $\mathds C{\rm P}^m_b=(\mathds C{\rm P}^m,g_b)$ the complex projective space endowed with the Fubini--Study metric $g_b$ of holomorphic sectional curvature $4b>0$. More precisely, given homogeneous coordinates $[Z_0,\dots, Z_{m}]$ on $\mathds C{\rm P}^m$ and setting affine coordinates $(z_1,\dots, z_m)$, with $z_j:=Z_j/Z_0$, $j=1,\dots, m$, on the open set $U_0=\{Z_0\neq 0\}$, a K\"ahler potential for the metric $g_b$ is given by:
\begin{equation}\label{phib}
\Phi_b(z):=\frac 1b\log\left(1+b\sum_{j=1}^m|z_j|^2\right).
\end{equation}
We denote by $\omega_b$ the K\"ahler form associated to $g_b$, i.e. $\omega_b|_{U_0}=\frac i2\partial \bar \partial \Phi_b$. When $b=1$ we usually use the notation $g_{FS}$ and $\omega_{FS}$ for $g_1$ and $\omega_1$ respectively.

We say that a K\"ahler immersion is {\em full} if the submanifold is not contained in any totally geodesic submanifold of the ambient space, that is in our context when the manifold ${\rm M}$ K\"ahler immersed in $\mathds C^m$ (or $\mathds C{\rm P}_b^m$) is not contained into $\mathds C^{m'}$ (or $\mathds C{\rm P}^{m'}_b$) for any $m'< m$. When the ambient space is a complex space form, E. Calabi  \cite{calabi} proved that a full K\"ahler immersion determines uniquely the dimension of the ambient space. In particular for $\mathds C^m$ and $\mathds C{\rm P}^m$ Calabi's result reads as follows.

\begin{theorem}[Calabi Rigidity Theorem \cite{calabi}]\label{calabirigidity}
If a neighborhood $V$ of a point $p$ admits a full Kähler immersion into $\mathds C^m$ or $\mathds C{\rm P}^m$, then $m$ is uniquely determined by the metric and the immersion is unique up to rigid motions of $\mathds C^m$ (respectively $\mathds C{\rm P}^m$).
\end{theorem}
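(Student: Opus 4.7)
The plan is to deduce both assertions from the uniqueness properties of Calabi's \emph{diastasis function}, which provides a canonical local K\"ahler potential transforming naturally under K\"ahler immersions.

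First I would introduce, for a K\"ahler metric $g$ on a neighborhood with real-analytic potential $\varphi(z,\bar z)$, the sesqui-analytic extension $\tilde\varphi(z,\bar w)$ obtained by continuing $\varphi$ holomorphically in $z$ and antiholomorphically in $w$, and define
\[
D_p(z) := \tilde\varphi(z,\bar z) + \tilde\varphi(p,\bar p) - \tilde\varphi(z,\bar p) - \tilde\varphi(p,\bar z).
\]
A direct check shows that $D_p$ is again a K\"ahler potential for $g$, vanishes to order two at $p$, and is independent of the initial choice of $\varphi$, so it is an intrinsic local invariant of $g$. The crucial functorial property---verified by noting that any two potentials for $g$ differ by a pluriharmonic function which cancels in the defining combination---is that for a K\"ahler immersion $F\colon(V,g)\hookrightarrow(N,g_N)$ one has $D^V_p = D^N_{F(p)}\circ F$.

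Next I would compute the diastasis of the two model spaces at a base point. For $(\mathds C^m,\omega_0)$ at the origin one finds $D_0(z)=\|z\|^2$; for $(\mathds C{\rm P}^m_b,\omega_b)$ at $[1\!:\!0\!:\!\cdots\!:\!0]$, sesqui-analytic continuation of the potential \eqref{phib} yields $D_0(z)=\frac1b\log\bigl(1+b\sum_j|z_j|^2\bigr)$. Assume now that a neighborhood of $p$ admits two full K\"ahler immersions $F\colon V\to\mathds C^m$ and $G\colon V\to\mathds C^{m'}$; after pre-composing with ambient translations one may take $F(p)=0=G(p)$, and the functorial property combined with the model computations gives the key identity
\[
\sum_{j=1}^m |F^j(z)|^2 \;=\; D^V_p(z) \;=\; \sum_{k=1}^{m'} |G^k(z)|^2
\]
on a neighborhood of $p$. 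In the projective setting the same pullback calculation reduces to precisely the same identity after canceling the logarithms.

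To finish I would polarize this identity by replacing $\bar z$ with an independent variable $\bar w$, obtaining $\sum_j F^j(z)\overline{F^j(w)} = \sum_k G^k(z)\overline{G^k(w)}$; fullness of each immersion translates into $\mathds C$-linear independence of the families $\{F^j\}$ and $\{G^k\}$. Expanding both sides in the basis of holomorphic monomials at $p$ and comparing the induced positive semidefinite Gram matrices of Taylor coefficients forces $m=m'$ and the existence of a unitary $U\in U(m)$ with $G=UF$; undoing the translations gives the desired rigid motion of $\mathds C^m$. In the projective case, the analogous argument applied to the homogeneous lifts $(1,F^1,\dots,F^m)$ and $(1,G^1,\dots,G^{m'})$ produces a unitary transformation of $\mathds C^{m+1}$, which descends to a holomorphic isometry of $\mathds C{\rm P}^m$. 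I expect this last step---extracting dimensional equality plus unitary equivalence from the identity between two sums of squared moduli of independent holomorphic families---to be the technical heart of the proof and the main obstacle.
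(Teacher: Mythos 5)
The paper does not prove this statement: it is quoted verbatim from Calabi's 1953 paper \cite{calabi} as a known black box, so there is no internal proof to compare against. Your proposal is, in substance, a faithful reconstruction of Calabi's original argument via the diastasis function, and the outline is correct: the diastasis is well defined and potential-independent (the pluriharmonic ambiguity $h+\bar h$ cancels in the four-term combination), it pulls back under K\"ahler immersions, the model computations $D_0(z)=\|z\|^2$ and $D_0(z)=\frac1b\log\bigl(1+b\sum_j|z_j|^2\bigr)$ are right, and the polarized identity $\sum_j F^j(z)\overline{F^j(w)}=\sum_k G^k(z)\overline{G^k(w)}$ together with the linear independence coming from fullness does force equality of the Gram matrices of the Taylor-coefficient vectors, hence $m=m'$ and a unitary intertwiner. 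Two points you should make explicit. First, the diastasis needs a real-analytic potential, which is not automatic for an arbitrary K\"ahler metric; here it is supplied by the hypothesis, since $\varphi=\Phi\circ F$ for the real-analytic ambient potential $\Phi$ and the holomorphic immersion $F$. Second, in the projective case the relevant linearly independent family is the lift $(1,F^1,\dots,F^m)$ including the constant function, and fullness must be interpreted as non-containment in a projective hyperplane so that this family of $m+1$ functions is independent; the resulting unitary of $\mathds{C}^{m+1}$ then descends to an isometry of $\mathds{C}{\rm P}^m$. The step you flag as the technical heart (extracting $m=m'$ and the unitary from the identity of Hermitian sums) is indeed where the work lies in \cite{calabi}, and your Gram-matrix sketch is the standard and correct way to carry it out.
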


Calabi Rigidity Theorem is a very useful tool to study relatives K\"ahler manifolds. The following result of M. Umehara \cite{umehara} together with Calabi's Rigidity proves that $\mathds C^m$ and $\mathds C{\rm P}^{m'}$ are not relatives for any $m$, $m'<\infty$.

\begin{theorem}[M. Umehara \cite{umehara}]\label{umehara}
Any K\"ahler submanifold of $\mathds C^m$, $m\leq \infty$, admits a full K\"ahler immersion into $\mathds C{\rm P}_b^\infty$, for any value of $b>0$.
\end{theorem}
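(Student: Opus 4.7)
The plan is to construct the required K\"ahler immersion explicitly, using the Taylor expansion of $e^{b\|z\|^2}$ to supply the coefficients. Let $\iota\colon {\rm M} \hookrightarrow \mathds C^m$ be the given K\"ahler embedding, so that $\iota^*\omega_0 = \omega_{\rm M}$.

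First I would define a map $F\colon \mathds C^m \to \mathds C{\rm P}_b^\infty$ in homogeneous coordinates by declaring its entries to be the rescaled monomials $F_\alpha(z) = \sqrt{b^{|\alpha|}/\alpha!}\, z^\alpha$ as $\alpha$ ranges over all multi-indices in $\mathds N^m$. The verification that $F$ is a K\"ahler immersion reduces to the single identity
\[
\sum_\alpha \frac{b^{|\alpha|}}{\alpha!}|z^\alpha|^2 = e^{b\|z\|^2},
\]
which, fed into the Fubini--Study potential \eqref{phib}, yields $F^*\Phi_b(z) = \frac{1}{b}\log e^{b\|z\|^2} = \|z\|^2$, so $F^*\omega_b = \omega_0$. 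Fullness of $F$ is immediate because the monomials $\{z^\alpha\}$ are linearly independent holomorphic functions on $\mathds C^m$. The composition $F\circ\iota\colon {\rm M} \to \mathds C{\rm P}_b^\infty$ then satisfies $(F\circ\iota)^*\omega_b = \omega_{\rm M}$ and so is a K\"ahler immersion.

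The final step is to promote this composition to a \emph{full} immersion. If the restrictions $F_\alpha|_{\rm M}$ satisfy any linear relations, I would pass to a basis of their span and re-express the map in those coordinates; by construction the pulled-back potential is unchanged. Since $\rm M$ has positive complex dimension, working in local holomorphic parameters around any point one exhibits, for each $N \geq 0$, a monomial restricting to a function vanishing to order exactly $N$ along a chosen direction, which forces the span of $\{z^\alpha|_{\rm M}\}$ to be infinite-dimensional. Hence the resulting immersion is not contained in any finite-dimensional projective subspace, i.e.\ it is full into $\mathds C{\rm P}_b^\infty$. The case $m = \infty$ reduces to the finite-dimensional one by working locally, since any relatively compact open subset of $\rm M$ lies in some $\mathds C^{m'} \subset \mathds C^\infty$. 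The main delicate point, and the only step requiring argument beyond the power-series identity, is the fullness verification just outlined.
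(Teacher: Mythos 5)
The paper states this theorem purely as a citation of Umehara and supplies no proof of its own, so there is no internal argument to compare against; I can only assess your construction directly. For finite $m$ it is the standard one and essentially sound: composing $\iota$ with the weighted-monomial map $\mathds C^m\to\mathds C{\rm P}^\infty_b$ and using $\sum_\alpha \frac{b^{|\alpha|}}{\alpha!}|z^\alpha|^2=e^{b\|z\|^2}$ does give a K\"ahler immersion, and reducing fullness to the infinite-dimensionality of the span of the restricted monomials is the right move. With the paper's definition of fullness (the image not contained in $\mathds C{\rm P}^{m'}_b$ for any finite $m'$), that infinite-dimensionality already finishes the argument, so your preliminary step of ``passing to a basis of the span'' is unnecessary --- and as written it is also slightly hazardous, since the pulled-back potential $\log\sum_\alpha|F_\alpha|^2$ is preserved only under \emph{unitary}, not arbitrary linear, changes of the homogeneous coordinates. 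Two cosmetic points: to match the potential \eqref{phib} the affine components should be $\sqrt{b^{|\alpha|-1}/\alpha!}\,z^\alpha$ for $|\alpha|\ge 1$, so that $1+b\sum_{|\alpha|\ge 1}|w_\alpha|^2=e^{b\|z\|^2}$; and the function vanishing to order exactly $N$ is really the polynomial $(z_j-q_j)^N$ lying in the span of the monomials, not a monomial itself, though the linear-independence conclusion is unaffected.

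The one genuine gap is the case $m=\infty$. It is false that a relatively compact open subset of a K\"ahler submanifold of $\mathds C^\infty=\ell^2$ must lie in a finite-dimensional coordinate subspace: the image of $z\mapsto\bigl(z,\,z^2/\sqrt{2!},\,z^3/\sqrt{3!},\dots\bigr)$ is a one-dimensional K\"ahler submanifold of $\ell^2$ no open piece of which is contained in any $\mathds C^{m'}$, so the claimed reduction does not work. Fortunately it is also unnecessary: your map $F$ makes sense verbatim on $\ell^2$ (the multi-indices now range over finitely supported sequences, the index set is still countable, and the same identity $\sum_\alpha \frac{b^{|\alpha|}}{\alpha!}|z^\alpha|^2=e^{b\|z\|^2}<\infty$ guarantees that the lift lands in $\ell^2$), and the fullness argument goes through unchanged. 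Replace the reduction by this remark and the proof is complete.
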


Finally, we recall that up to rescale the metric by a positive factor, a homogeneous compact K\"ahler manifold admits a local  K\"ahler immersion into $\mathds C{\rm P}_b^m$, $m<\infty$.

\begin{theorem}[A. Loi, R. Mossa \cite{loimossa}]\label{loimossa}
Let $({\rm M},g)$ be a simply-connected homogeneous K\"ahler manifold with associated K\"ahler form $\omega$ integral. Then there exists a positive real number $\lambda$ such that $({\rm M},\lambda g)$ admits a K\"ahler immersion in $(\mathds C{\rm P}^m, g_{FS})$.
\end{theorem}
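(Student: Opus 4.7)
The plan is to realize $({\rm M},\lambda g)$ inside some $\mathds C{\rm P}^m$ via a Kodaira-type coherent-state map associated to a prequantum line bundle, and then to use homogeneity, through the standard Rawnsley $\varepsilon$-function identity, to force the pullback of the Fubini--Study form to equal $\lambda\omega$ exactly.

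First I would use integrality of $\omega$, together with simply-connectedness of ${\rm M}$, to produce a holomorphic hermitian line bundle $(L,h)\to{\rm M}$ with Chern curvature $\omega$, unique up to isomorphism. Letting $G$ be the connected Lie group of holomorphic isometries of $({\rm M},\omega)$---which acts transitively by homogeneity---this uniqueness lets us lift the $G$-action, after possibly passing to a central $U(1)$-extension $\tilde G$, to an action on $L$ preserving $h$. For a positive integer $k$, consider the space $\mathcal{H}_k$ of holomorphic sections of $L^{\otimes k}$ that are square-integrable with respect to $h^{\otimes k}$ and the Liouville volume; for $k$ large enough, $\mathcal{H}_k$ is finite-dimensional and gives a base-point-free system separating $1$-jets. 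An $L^{2}$-orthonormal basis $\{s_{0},\dots,s_{m}\}$ of $\mathcal{H}_k$ then defines a holomorphic map
\[
\Phi_{k}\colon {\rm M}\longrightarrow \mathds C{\rm P}^{m},\qquad x\longmapsto [s_{0}(x):\cdots:s_{m}(x)],
\]
which is $\tilde G$-equivariant via the induced unitary representation of $\tilde G$ on $\mathcal{H}_k$.

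The next step is the pullback identity obtained from a direct computation in a local trivialization of $L^{\otimes k}$,
\[
\Phi_{k}^{*}\omega_{FS}=k\,\omega+\tfrac{i}{2}\partial\bar\partial\log\varepsilon_{k},\qquad \varepsilon_{k}(x):=\sum_{j=0}^{m}|s_{j}(x)|_{h^{\otimes k}}^{2}.
\]
The function $\varepsilon_{k}$ is the norm-squared of the evaluation functional at $x$ on $\mathcal{H}_k$, hence invariant under the unitary $\tilde G$-action and therefore $G$-invariant on ${\rm M}$. By transitivity of $G$, $\varepsilon_{k}$ is constant, killing the $\partial\bar\partial\log\varepsilon_{k}$ correction, so $\Phi_{k}^{*}\omega_{FS}=k\,\omega$. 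Choosing $\lambda=k$ realizes $({\rm M},\lambda g)$ as a K\"ahler submanifold of $(\mathds C{\rm P}^{m},g_{FS})$, as required.

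The main obstacle is ensuring that $\mathcal{H}_k$ is simultaneously finite-dimensional and rich enough to define a well-posed holomorphic map separating $1$-jets. In the compact case this is just Kodaira's embedding theorem applied to the positive line bundle $L^{\otimes k}$. In the general simply-connected homogeneous setting one must invoke structure theory (for instance a Dorfmeister--Nakajima-type decomposition) to reduce to the explicit models---generalized flag manifolds (Borel--Weil), bounded homogeneous domains, and Euclidean factors---where $\mathcal{H}_k$ admits a concrete description and both its finite-dimensionality and its separating property can be read off from the homogeneous structure. The lifting step producing $\tilde G$ is essentially formal once such an $(L,h)$ is in hand, but verifying that one can choose a single $k$ for which $\mathcal{H}_k$ has all the required properties simultaneously is the technical heart of the argument.
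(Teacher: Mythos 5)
The paper itself offers no proof of this statement---it is quoted from Loi--Mossa \cite{loimossa}---so the only meaningful comparison is with the argument of the cited reference, and your coherent-state scheme is essentially that argument: integrality plus simple connectedness give a unique prequantum bundle, the transitive action lifts (up to a central extension) to a unitary action on the square-integrable holomorphic sections of $L^{\otimes k}$, Rawnsley's $\varepsilon$-function is therefore constant, and constancy removes the $\partial\bar\partial\log\varepsilon_k$ correction so that $\Phi_k^*\omega_{FS}=k\,\omega$. One simplification you are entitled to: once $\varepsilon_k$ is a \emph{positive} constant, base-point-freeness and the immersion property come for free (the map is defined everywhere and $\Phi_k^*g_{FS}=k\,g$ is nondegenerate), so the ``separating $1$-jets'' condition need not be verified separately; the only substantive input is that $\mathcal H_k\neq 0$ for some $k$.

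The genuine gap is where you already sense it, but it is sharper than you state: finite-dimensionality of $\mathcal H_k$ holds only when ${\rm M}$ is compact, i.e.\ a flag manifold, where Kodaira's theorem applies. For the noncompact pieces of the Dorfmeister--Nakajima decomposition (bounded homogeneous domains and flat $\mathds C^n$) the relevant weighted Bergman and Bargmann--Fock spaces are infinite-dimensional, and no finite-dimensional target can work: for the hyperbolic disc, for instance, Calabi's diastasis expansion has infinitely many independent positive coefficients, so it admits no K\"ahler immersion into any $\mathds C{\rm P}^m$ with $m<\infty$. Hence the theorem with $m<\infty$ is really a statement about compact ${\rm M}$ (which is all the paper needs in Corollary \ref{cor}); in general both the statement and your proof must be read with $m\le\infty$. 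With that reading, and granting non-vanishing of $\mathcal H_k$ for $k$ large on each factor of the decomposition, your argument is sound and matches the known proof.
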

Observe that if we drop the hypothesis for ${\rm M}$ to be simply connected, Theorem \ref{loimossa} still applies locally. For details and further results about K\"ahler immersions of K\"ahler manifolds into complex space forms we refer the reader to \cite{libro} and references therein.

\section{Preliminaries on Cauchy--Riemann manifolds}\label{crsec}
For the arguments in this section we refer the reader to \cite{tomCR}. Let $\rm M$ be a smooth manifold of real dimension $2n+k$. A  CR--structure $(\rm M,  \mathcal{D}, \rm J)$ of type $(n, k)$ on $\rm M$ consists of the data of a smooth subbundle $\mathcal{D}\subseteq T\rm M$ of fiber dimension $2n$ and a fiber preserving smooth vector bundle isomorphism $\rm J : \mathcal{D}\rightarrow \mathcal{D},$ satisfying $\rm J^2 = -{\rm Id}$ and the integrability condition on the Nijenhuis tensor:
$$
\rm N(X,Y)=[X,Y]-[\rm J X,\rm J Y]-\rm J([\rm J X,Y]+[X,\rm J Y])=0, \quad \forall\  X,Y\in\Gamma(\rm M, \mathcal D).
$$
The numbers $n$ and $k$ are called respectively the CR--dimension and CR--codimension of $\rm M$. When $k=0$ $\rm M$ is {\em holomorphic}, while if $n=0$ it is called {\em totally real}.
 Prime examples of  CR--structures of CR--dimension $n$ are provided by the
real submanifolds $\rm M$ of a complex manifold $\tilde{\rm M}$ for which 
$
\dim_{\mathbb{C}} (T{\rm M}_x\otimes \mathbb{C}\cap T_x^{1,0}\tilde {\rm M})=n \,\,\,\forall x\in \rm M.
$
In this case $\mathcal{D}:= \mathcal{R}e(T{\rm M}\otimes\mathbb{C}\cap T^{0,1}\tilde{\rm M})={\rm J}T{\rm M}\cap T{\rm M}$
is a smooth real subbundle (of real  fiber dimension $2n$) of $T\rm M$.

The \emph {real  vector valued  Levi form} 
 is the real bilinear form: 
\begin{equation}\label{LR}
\mathcal{L}^{\R}:\mathcal{D}\times\mathcal{D}\rightarrow T{\rm M}\diagup \mathcal D, \quad \mathcal{L}^{\R} (X,Y):=\pi^\R([{\rm J}X,Y]-[X,{\rm J}Y])
\end{equation} 
where $\pi^\R$ is the canonical projection $T{\rm M}\rightarrow T{\rm M}\diagup\mathcal{D}.$
Observe that  
\eqref{LR} measures whether the subbundles $\mathcal{D}$ 
of $T\rm M$ is  integrable and
how the complex structure $\rm J$ interplays with the  integrability 
on the respective  fibers.
Define the \emph{Levi Kernel} at $x\in M $ as the null space of the Levi form:
\begin{equation}
\emph{Null}(\mathcal{L}^\R)=\{X\in\Gamma({\rm M},\mathcal{D})\,|\,\mathcal{L}^{\R}(X,Y)=0, \ \forall \, Y\in\Gamma({\rm M} ,\mathcal{D})  \}.
\end{equation}
We recall here the definition of Levi degenerate CR--manifold, recent results of a more general notion can be found e.g. in \cite{MMN}.
\begin{defin}
A CR--manifold $(\rm M,\mathcal{D}, {\rm J})$  of type $(n,k)$ is \emph{Levi degenerate} in $x\in \rm M$ if $\dim\emph{Null}_x(\mathcal{L}^\R)\neq 0$.  Levi degenerate CR--manifolds with  $\emph{Null}(\mathcal{ L}^\R)=\mathcal{D}$ are called \emph{Levi flat}.
\end{defin}

\par
 On any Levi flat CR manifold $\rm M$  there is a unique foliation $\mathcal{F}$  by complex manifolds such that $T\mathcal{F}$ is the Levi distribution 
$\emph{Null}_x(\mathcal{L}^\R)$ of $\rm M$,  see \cite{DragCRFoliation}. 
This can be extended to the case when $({\rm M},{\rm J},\mathcal D)$ is Levi degenerate with $\dim_\R\emph{Null}_x(\mathcal{L}^\R)=2n'=
\emph{cost}$ ($2n'\leq 2n$). Since $\emph{Null}
_x(\mathcal{L}^\R)$ is involutive and ${\rm J}$ invariant, one can applies both Frobenius  
and Newlander-Niremberg to obtain a foliation $\mathcal{F}$ of $\rm M$ by complex $n'-$dimensional 
manifolds,  i.e.  $\rm M$ can be realized as a disjoint union $\rm{M}=\bigcup_\alpha {\rm F}_{\alpha}$ by complex $n'-$dimensional 
manifolds ${\rm F}_\alpha$ with $T{\rm F}_\alpha\simeq\emph{Null}_x(\mathcal{L}^\R)$. This is essentially the following:

\begin{theorem}[M. Freeman \cite{Freeman74}]\label{freeman}
Let $\mathcal D$ and $\emph{Null}(\mathcal{L}^\R)$ have constant dimension on $\rm M$. Then for each $x\in {\rm M}$ there exist a neighborhood $U$ of $x$ and a unique smooth foliation of ${\rm M}\cap U$ by complex submanifolds, such that the tangent space to the leaf through $x$ is $\emph{Null}_x(\mathcal{L}^\R)$.
\end{theorem}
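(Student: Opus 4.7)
The plan is to produce the foliation by applying the real Frobenius theorem to the distribution $\emph{Null}(\mathcal{L}^\R)$ and then equipping each resulting integral submanifold with a complex structure via the Newlander--Nirenberg theorem. Two algebraic properties of $\emph{Null}(\mathcal{L}^\R)$ drive the argument: its $\rm J$-invariance and its involutivity. Both rest on the CR integrability of $({\rm M},\mathcal D,{\rm J})$, while the constant-rank hypothesis is what guarantees smoothness of the bundle and hence applicability of Frobenius.

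The $\rm J$-invariance is an easy algebraic identity. A direct computation from \eqref{LR} yields $\mathcal{L}^\R({\rm J}X,Y)=-\mathcal{L}^\R(X,{\rm J}Y)$, and since $\mathcal D$ is $\rm J$-stable, $X\in \emph{Null}_x(\mathcal L^\R)$ immediately implies ${\rm J}X \in \emph{Null}_x(\mathcal L^\R)$. Involutivity is the technical heart. I would complexify and split $\emph{Null}(\mathcal L^\R)\otimes \mathbb C = K^{1,0}\oplus K^{0,1}$, where $K^{1,0}\subset T^{1,0}{\rm M}$ is the $(+i)$-eigenspace of $\rm J$. For $Z,W \in K^{1,0}$ the CR integrability $[T^{1,0},T^{1,0}]\subset T^{1,0}$ already forces $[Z,W]\in T^{1,0}{\rm M}$. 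To see that in fact $[Z,W]\in K^{1,0}$, I would compute the complexified Levi form on $([Z,W],\bar V)$ and expand via the Jacobi identity $[[Z,W],\bar V]=[[Z,\bar V],W]+[Z,[W,\bar V]]$; because $Z,W$ annihilate the Levi form, both $[Z,\bar V]$ and $[W,\bar V]$ lie in $T^{1,0}{\rm M}\oplus T^{0,1}{\rm M}$, and a careful type decomposition combined with one more use of CR integrability produces the required vanishing.

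Once $\emph{Null}(\mathcal L^\R)$ has been established as a smooth, $\rm J$-invariant, involutive real subbundle of $\mathcal D$ of constant rank $2n'$, Frobenius produces on a neighborhood $U$ of each $x\in {\rm M}$ a unique smooth foliation by real $2n'$-dimensional submanifolds whose tangent spaces coincide with $\emph{Null}(\mathcal L^\R)$. On each leaf $F$ the restriction ${\rm J}|_{TF}$ is a well-defined almost complex structure, and the vanishing of its ordinary Nijenhuis tensor is simply the restriction of the CR integrability condition on $\mathcal D$; Newlander--Nirenberg then makes $F$ into an $n'$-dimensional complex manifold. Uniqueness of both the foliation and of the complex structure on each leaf is inherited from the uniqueness clauses of the two classical theorems. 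I expect the involutivity step to be the main obstacle: the Levi form is tensorial only modulo $\mathcal D$, so the type computations have to be carried out inside the quotient $T{\rm M}/\mathcal D \otimes \mathbb C$, and it is precisely the constant-rank hypothesis on $\emph{Null}(\mathcal L^\R)$ that ensures $K^{1,0}$ is a genuine smooth subbundle rather than a pointwise family of subspaces.
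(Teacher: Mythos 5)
Your proposal follows exactly the route the paper itself indicates (and attributes to Freeman): observe that $\emph{Null}(\mathcal{L}^\R)$ is a $\rm J$-invariant, involutive subbundle of constant rank, apply the real Frobenius theorem to get the foliation, and then Newlander--Nirenberg on each leaf to get the complex structure. The paper only sketches this in one sentence before citing \cite{Freeman74}, whereas you additionally fill in the $\rm J$-invariance identity $\mathcal{L}^\R({\rm J}X,Y)=-\mathcal{L}^\R(X,{\rm J}Y)$ and the Jacobi-identity argument for involutivity of $K^{1,0}$, both of which are correct; so your argument is a more detailed version of the same proof.
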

Such foliation is called the \emph{Levi foliation} of $\rm M$. The following example shows a Levi flat CR--manifold admitting a Levi foliation.

\begin{example}[E. Cartan,  \cite{Cartan1933}]\label{cartan}
Let  $\rm M$  be  a Levi-flat  real-analytic smooth hypersurface in $\C^m$.  It was shown by E. Cartan that near each point $x \in \rm M$, there exist local holomorphic coordinates $( z, w) \in \C^{m-1}\times\C$ vanishing at $x$, such that $\rm M$ near $x$ is described by ${\rm Im}\,w = 0$.  The distribution $\mathcal{D}$ can be given as the tangent space to the leaves of the Levi foliation $\{( z, w) : w = t\,|\, t\in\R\}$. 
\end{example}

We are interested in CR--structures induced by a Riemannian immersion into a K\"ahler manifold. Let  $\tilde {\rm M}$ be a  $m$-dimensional K\"ahler manifold with complex structure $\rm J$ and K\"ahler metric $g$. Following \cite{chenI}, an isometric Riemannian submanifold ${\rm M}$ of ${\rm \tilde M}$ is a {\em CR--submanifold} if there exists a differentiable distribution $\mathcal D\!:{\rm M}\rightarrow T{\rm M}$ of constant dimension such that:

\begin{enumerate}
\item $\mathcal D$ is holomorphic, i.e. ${\rm J}\mathcal D_x=\mathcal D_x$ for each $x\in \rm M$;
\item
 the complementary orthogonal distribution $\mathcal D^\perp: x\mapsto \mathcal 
 D_x^\perp\subset T_x{\rm M}$ is totally real, i.e. ${\rm J} \mathcal D_x^\perp\subset T_x^
 \perp {\rm M}$, where $T_x^\perp \rm M$ is the normal space of $\rm M$ in $\tilde {\rm 
 M}$ at $x$.
 \end{enumerate}
 Observe that the triple $({\rm M},\mathcal D, {\rm J})$ is a CR--manifold in the sense above.

Any isometric Riemannian submanifold of a K\"ahler manifold carries a natural structure of {CR--submanifold} as follows. Define  $\mathcal{D}={\rm J} T{\rm M}\cap T{\rm M} $ and let $\mathcal{D}^\perp$ be the 
orthogonal complement of $\mathcal{D}$ in $T{\rm M}$ with respect to $g$,  i.e. $T_x
{\rm M}=\mathcal{D}_x\oplus \mathcal{D}_x^\perp$. Then, $\mathcal D$ is holomorphic by definition, and ${\rm J} \mathcal D_x^\perp\subset T_x^ \perp {\rm M}$ for $g\left({\rm J}\mathcal{D}^\perp_x,T_x{\rm M}\right)= g\left({\rm J}\left(({\rm J}T_x {\rm M})^\perp\cap T_x {\rm M}\right),T_x {\rm M}\right)=0$. 

Finally, we recall the definition of CR--product (see \cite{chenI,chenII} for details and further results).
\begin{defin}\label{crproducts}
A CR--submanifold of a K\"ahler manifold is called a {\em CR--product} if it is locally a Riemann product of a holomorphic submanifold and a totally real one.
\end{defin}
\begin{remark}Observe that our choice of $\mathcal D=\langle\frac{\partial}{\partial z_1},\dots,\frac{\partial}{\partial z_{m-1}}\rangle_\C$ in Example \ref{cartan} gives $\rm M$ a structure of CR--product with the complex structure and the flat metric induced by the canonical ones in $\mathds C^m$, where $\mathcal{D}^\perp=\langle\frac{\partial}{\partial t}\rangle_\R$.
\end{remark}

To conclude this section we present an example of two complex manifolds which are CR--relatives but not relatives.
\begin{example}[The $3$-sphere in $\mathds C^2$ and in $ \mathds C{\rm P}^2$]\label{3sphere}\rm
We describe here how the natural inclusions of the $3$-sphere $\mathds S^3=\{(z_1,z_2)\in \mathds C^2|\ |z_1|^2+|z_2|^2=1\}$ in  $\mathds C^2$ and $\mathds C{\rm P}^2$ induce the same structure of  CR--submanifold.  Let us start with $\mathds C^2$. Consider the usual identification $\mathds C^2\simeq \mathds R^4$, $(z_1,z_2)\mapsto (x_1,y_1,x_2,y_2)$, where $z_1=x_1+iy_1$, $z_2=x_2+iy_2$. Let $p=(z_1,z_2)=(x_1,y_1,x_2,y_2)\in \mathds S^3$. 
Denote by $X^\perp$ the normal vector, i.e.:
\begin{equation}\label{xperp}
X^\perp:=x_1\frac{\partial}{\partial x_1}+y_1\frac{\partial}{\partial y_1}+x_2\frac{\partial}{\partial x_2}+y_2\frac{\partial}{\partial y_2}.
\end{equation}
Then a basis for $T_p\mathds S^3$ is given by $\{V_1,V_2,V_3\}$ with:
$$
V_1=y_1\frac{\partial}{\partial x_1}-x_1\frac{\partial}{\partial y_1}+y_2\frac{\partial}{\partial x_2}-x_2\frac{\partial}{\partial y_2}, \quad\quad
V_2=x_2\frac{\partial}{\partial x_1}-y_2\frac{\partial}{\partial y_1}-x_1\frac{\partial}{\partial x_2}+y_1\frac{\partial}{\partial y_2},
$$
$$
V_3=-y_2\frac{\partial}{\partial x_1}-x_2\frac{\partial}{\partial y_1}+y_1\frac{\partial}{\partial x_2}+x_1\frac{\partial}{\partial y_2},
$$

The complex structure ${\rm J}$ acts on the $V_j$'s by ${\rm J}V_1=X^\perp$, ${\rm J}V_2= -V_3$, ${\rm J}V_3=V_2$,  thus $\mathcal D:={ T}\mathds S^3\cap {\rm J}({ T}\mathds S^3)$ is spanned by $V_2$ and $V_3$. Then $(\mathds S^3, \mathcal{D}, \rm J)$ has a natural structure of CR-- submanifold of $\C^2$.

 Consider now homogeneous coordinates $[Z_0:Z_1:Z_2]$ on $\mathds C{\rm P}^2$ and define affine coordinates $z_1=\frac{Z_1}{Z_0}$, $z_2=\frac{Z_2}{Z_0}$ on the open set $U_0=\{Z_0\neq 0\}$. The $3$-sphere $\mathds S^3\subset U_0$ is realized as CR--submanifold of $\mathds C{\rm P}^2$ in the following way. The Fubini--Study metric $g_{FS}$ is described on $U_0$ by the K\"ahler potential $\Phi(z)=\log\left(1+|z_1|^2+|z_2|^2\right)$ (see \eqref{phib} above). Let $X^\perp$ be as in \eqref{xperp}, then it is not hard to see that $g_{FS}(X^\perp,\cdot)=g_0(X^\perp, \cdot)$, where $g_0$ is the euclidean metric on $U_0$. Thus, 
a basis for $T_p\mathds S^3$ is given again by $\{V_1,V_2,V_3\}$ as in the $\mathds C^2$ case, and  $\mathcal D$ is spanned by $V_2$ and $V_3$.
We conclude  that the induced structure of CR--submanifold is the same on $\mathds S^3$ and it can be seen that it is Levi non degenerate. 
\end{example}

\section{Proof of Theorem \ref{main} and corollaries}

\begin{proof}[Proof of Theorem \ref{main}]\label{proofmain}
Let $(\tilde {\rm M}_1,{\rm J}_1,g_1)$, $(\tilde{ \rm M}_2,{\rm J}_2,g_2)$ be two K\"ahler manifolds which are not relatives and let $\rm M$ be a common CR--submanifold. 
Denote by $f_1\!:{\rm M}\rightarrow  {\rm\tilde{\rm  M}}_1$, $f_2\!:{\rm M}\rightarrow 
\tilde{\rm  M}_2$  the isometric immersions and by $(\mathcal D,{\rm J},g)$ the 
holomorphic distribution, the almost complex structure and the Riemannian metric on 
${\rm M}$  induced by both $\tilde{\rm  M}_1$ and $\tilde{\rm  M}_2$, i.e. ${\rm J}
_j:={f_j}_*{\rm J}$ and $f_j^*g_j=g$,   for $j=1,2$.  Observe that $\dim \mathcal D_x^
\perp\neq 0$, otherwise $\rm M$ would be a common K\"ahler submanifold of $\tilde 
{\rm M}_1$ and $\tilde{\rm M}_2$. By Th. \ref{freeman} if ${\rm M}$ is degenerate with constant Levi kernel, then it carries a holomorphic foliation $\mathcal F$ with leaves ${\rm M}_c$ such that $({\rm M}_c, {\rm J}|_{{\rm M_c}})$ is a complex submanifold of ${\rm M}$.
 It remains to shows that the metric induced by the inclusions of 
${\rm M}_c\subset {\rm M}$ in $\tilde{ \rm M}_1$ and $\tilde{\rm M}_2$ induce the same K\"ahler structure 
on ${\rm M_c}$. The 
map $i_j=f_j\circ i_c\!: {\rm M}_c\rightarrow \tilde{ \rm M}_j$, for $j=1,2$, where $i_c$ is the 
inclusion $i_c\!: {\rm M}_c\rightarrow {\rm M}$,  is holomorphic, for 
\begin{equation}\label{igei}
{i_j}_* {\rm J}|_{{\rm M}_c}(X)={f_j}_* {i_c}_* {\rm J}|_{{\rm M}_c}(X)={f_j}_*{\rm J}(X)={\rm J}_j(X)={\rm J}_j({i_j}_*X)\nonumber
\end{equation}
 for any $X\in {\mathcal Null}_x(\mathcal L^{\mathds R})$.  Since $i_1^*g_1=i_c^*g=i_2^*g_2$ on ${\rm M}_c$, we need only to show that $({\rm M}_c {\rm J}|_{{\rm M}_c},i^*_cg)$ is K\"ahler. Let us first show that it is hermitian. Since $i_c^*g({\rm J}|_{{\rm M}_c}X,{\rm J}|_{{\rm M}_c}Y)=g({i_c}_*{\rm J}|_{{\rm M}_c}X,{i_c}_*{\rm J}|_{{\rm M}_c}Y)=g({\rm J}X,{\rm J}Y)=g(X,Y)$ for any $X$, $Y\in {\mathcal Null}_x(\mathcal L^{\mathds R})$, where we used that that ${\mathcal Null}_x(\mathcal L^{\mathds R})$ is ${\rm J}|_{{\rm M}_c}$ invariant. Conclusion follows by finally observing that (1,1)-form $\omega_c(X,Y):=i^*_jg({\rm J}|_{{\rm M_c}}X,Y)$ is the pull-back through the holomorphic map $i_j$ of the K\"ahler form $\omega_j$ on $\tilde{\rm M}_j$ and thus it is closed.
\end{proof}

\begin{proof}[Proof of Corollary \ref{cor2}]
A CR--product is Levi flat, thus the integrable manifold of its distribution $\mathcal D$ is a holomorphic submanifold. Further, $\mathcal D={\mathcal Null}(\mathcal L^{\mathds R})$ has constant dimension. Conclusion follows by Theorem \ref{main}.
\end{proof}

\begin{proof}[Proof of Corollary \ref{cor}]
Let $({\rm M},g)$ be a flag manifold and let $\omega$ be its K\"ahler form. Since a flag manifold is a homogeneous variety, $\omega$ is integral up to rescaling. If ${\rm M}$ is not simply connected, Theorem \ref{loimossa} applies locally, proving that there exists $\lambda>0$ such that $({\rm M},\lambda g)$ admits a local K\"ahler immersion in $\mathds C{\rm P}^m$. We claim that $\frac1{\sqrt{\lambda}}f$ is a local K\"ahler immersion of $({\rm M}, g)$ in $\mathds C{\rm P}^m_{\lambda}$. 
If the claim holds true, since by Theorem \ref{umehara} combined with Calabi Rigidity Theorem \ref{calabirigidity} $\mathds C^{m'}$ and $\mathds C{\rm P}_{\lambda}^m$ are not relatives for any $m$, $m'$, $\lambda>0$, conclusion follows by Theorem \ref{main}.  In order to prove the claim observe that locally if we denote by $\varphi$ a K\"ahler potential for $g$, since $f^*\omega_{FS}=\omega$, by \eqref{phib} we have:
$$
f^*\Phi_1=\log\left(1+ \sum_{j=1}^m|f_j|^2\right)=\lambda( \varphi+h+\bar h),
$$
for some holomorphic function $h$ on $U$, and conclusion follows by:
$$
\left(\frac1{\sqrt{\lambda}}f\right)^*\Phi_{\lambda}=\frac1{\lambda} \log\left(1+ \sum_{j=1}^m|f_j|^2\right)= \varphi+h+\bar h.
$$
\end{proof}


\begin{thebibliography}{99}

\bibitem{DragCRFoliation} E. Barletta, S. Dragomir, K. L. Duggal, {\em Foliations in Cauchy-Riemann Geometry}, Mathematical Surveys and Monographs {\bf140}, {\em Am. Math. Soc., Providence, RI, 2007}.




  

\bibitem{bejancuI}
A.  Bejancu, CR Submanifolds of a Kaehler Manifold. I, {\em Proc. Am. Math. Soc} {\bf69} (1978), No. 1, 135--142. 
\bibitem{bejancuII}
A.  Bejancu, CR Submanifolds of a Kaehler Manifold. II, {\em Trans. Am. Math. Soc.} {\bf250} (1979),  333--345. 
\bibitem{bejancuIII} A. Bejancu, M.  Kon, K. Yano,  CR-submanifolds of a complex space form, {\em J. Diff. Geom.} {\bf16} (1981), No. 1, 137--145.

 
 \bibitem{calabi} E. Calabi, Isometric imbedding of complex manifolds, {\em Ann. of Math.} (2) {\bf58}, (1953), 1--23.
 
 \bibitem{Cartan1933} E. Cartan,  E. Sur la géométrie pseudo-conforme des hypersurfaces de l'espace de deux variables complexes. {\em Annali di Matematica} {\bf11},  (1933), 17--90. 
 
\bibitem{chenI} B.Y.Chen,  CR-submanifolds of a Kaehler manifold. I,  {\em J. Diff. Geom.} {\bf16} (1981), No.2, 305--322.
\bibitem{chenII} B.Y.Chen,  CR-submanifolds of a Kaehler manifold. II,  {\em J. Diff. Geom.} {\bf16}  (1981), No. 3, 493--509.

\bibitem{chengdiscalayuan} X. Cheng, A. J. Di Scala, Y. Yuan,  K\"ahler submanifolds and the Umehara algebra, {\em Internat. J. Math.} {\bf28} (2017), no. 4, 1750027, 13 pp.


\bibitem{loidiscala} A. Di Scala, A. Loi, K\"ahler manifolds and their relatives, {\em Ann. Sc. Norm. Super. Pisa Cl.
Sci.} (5) {\bf9} (2010), no. 3, 495--501.

\bibitem{tomCR} S. Dragomir, G. Tomassini, {\em Differential Geometry and Analysis on CR Manifolds}, Birkh\"auser (2006).


\bibitem{Freeman74} M. Freeman, Local complex foliation of real submanifolds, {\em Math. Ann.} {\bf209}  (1974), 1--30.


\bibitem{huangyuan} X. Huang, Y. Yuan, Submanifolds of Hermitian symmetric spaces, Analysis and Geometry,
197-206, {\em Springer Proc. Math. Stat.}, {\bf127}, Springer, 2015.


\bibitem{loimossa} A. Loi, R. Mossa, Some remarks on homogeneous K\"ahler manifolds, {\em Geom. Ded.} {\bf179} (2015), 377--383.


\bibitem{libro} A. Loi, M. Zedda, {\em K\"ahler immersions of K\"ahler manifolds into complex space forms}, Lect. Notes of UMI, Springer (2018).



\bibitem{yanokon1982} K.  Yano,  M. Kon.  Contact CR submanifolds, {\em Kodai Math. J.} {\bf 5} (1982), No. 2,  238--252. 

\bibitem{MMN} S. Marini, C.  Medori,  M.  Nacinovich.  Higher order Levi forms on homogeneous CR manifolds,  {\em Math. Z. } {\bf 299},  (2021),  563–-589. 
\bibitem{mossa} R. Mossa, A bounded homogeneous domain and a projective manifold are not relatives, {\em Riv. Math. Univ. Parma (N.S.)} {\bf4} (2013), no. 1, 55--59. 



\bibitem{sutangtu} G. Su, Y. Tang, Z. Tu, K\"ahler submanifolds of the symmetrized polydisc, {\em C. R. Math. Acad. Sci. Paris} {\bf356} (2018), no. 4, 387--394. 



\bibitem{umehara} M. Umehara, Kaehler submanifolds of complex space forms, {\em Tokyo J. Math.} {\bf10} (1987), no. 1, 203--214.

\bibitem{relatives} M. Zedda,  Strongly not relatives K\"ahler manifolds, {\em Complex Manifolds} {\bf4} (2017), no. 1, 1--6.

\end{thebibliography}
\end{document}